\documentclass[12pt,oneside]{amsart}
\usepackage{amscd,amsmath,amssymb,amsfonts,a4}
\usepackage{pstricks}
\usepackage{color}

\swapnumbers
\theoremstyle{plain}
\newtheorem{thm}{Theorem}[section]

\newtheorem{lem}[thm]{Lemma}
\newtheorem{cor}[thm]{Corollary}

\theoremstyle{definition}

\newtheorem{defn}[thm]{Definition}




\newcommand{\tensor}{\otimes}




\newcommand{\Aut}{{\rm Aut}}

\newcommand{\Char}{{\rm char}}

\renewcommand{\tilde}{\widetilde}

\newcommand{\sO}{{\mathcal O}}


\newcommand{\C}{{\mathbb C}}

\newcommand{\Q}{{\mathbb Q}}

\input{xy}
\xyoption{all}
\begin{document}
\title{Birational Invariance of the $S$-fundamental group scheme}
\author{Amit Hogadi and Vikram Mehta}
\begin{abstract}Let $X$ and $Y$ be nonsingular projective varieties over an algebraically closed field $k$ of positive characteristic. If $X$ and $Y$ are birational, we show that their $S$-fundamental group schemes are isomorphic. 
\end{abstract}
\maketitle
\section{Introduction}

Let $k$ be an algebraically closed field and $X/k$ be a smooth projective variety. When $k=\C$, the Narasimhan-Seshadri theorem \cite{ns} and its generalizations \cite{mr2}, establish an equivalence between unitary representations of the topological fundamental group of $X(\C)$ and polystable vector bundles on $X$ with vanishing Chern classes. While the topological fundamental group does not make sense if $\Char(k)>0$, the category of vector bundles with various other conditions does make sense. This is the main motivation for defining the $S$-fundamental group scheme of a variety as a group scheme associated to a certain Tannakian category of vector bundles. The $S$-fundamental group scheme was first defined in (\cite{bps},$5.1$) for curves and later generalized   to arbitrary dimension in (\cite{langer1},$6.1$). 
\begin{defn}[\cite{langer1},$6.1$]
Let ${\sf Ns}(X)$ be the category of all numerically flat vector bundles on $X$, i.e. vector bundles $E$ such that both $E$ and $E^*$ are nef. Then ${\sf Ns}(X)$ is a Tannakian category (\cite{langer1},$5.4$) with a neutral fiber functor $i_x:{\sf Ns}(X)\to Vect_k$ given by $i_x(E)= E_x$ for a fixed point $x\in X(k)$. The $S$-fundamental group scheme $\pi_1^S(X,x)$ is defined to be $\Aut(i_x)$, the Tannaka group scheme associated to $({\sf Ns}(X),i_x)$. 
\end{defn}

Since one expects $\pi_1^S(X,x)$ to behave like a 'fundamental group', it is reasonable to ask if it actually enjoys properties which are enjoyed by other notions of fundamental group (e.g. topological or etale fundamental group). One such property is birational invariance. It is well known that the etale fundamental group (or even topological or Nori fundamental group) of a smooth projective variety depends only on its birational equivalence class. 
Let $X$ and $Y$ be smooth projective $k$-varieties which are birational. When $\Char(k)=0$, birational invariance of the $S$-fundamental group scheme follows from the birational invariance of the topological fundamental group. In positive characteristic, it was observed by Langer (\cite{langer1},$8.3$) that the the $S$-fundamental group schemes of $X$ and $Y$ are still isomorphic if one assumes $X$ and $Y$ are surfaces. In this case on can reduce to the case when $X$ is the blow up of $Y$ along a smooth center, thanks to the weak factorization theorem (see (\cite{hartshorne},V.$5.3$) or (\cite{wlodarczyk},$12.4$)). However we do not have weak factorization theorems for higher dimensional varieties in characteristic $p$. Thus the proof of the following theorem, which is the main result of this paper, involves a different approach, based on a vanishing theorem (see \eqref{sgen}).
\begin{thm} \label{main} Let $X$ and $Y$ be smooth projective varieties over $k$ and $\phi:X\dashrightarrow Y$ be a rational map which is birational. Let $x_0\in X(k)$ be a point where $\phi$ is defined. Then there exists an isomorphism $ \pi_1^S(X,x_0) \cong \pi_1^S(Y,\phi(x_0))$.
\end{thm}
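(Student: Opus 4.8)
The plan is to prove birational invariance by establishing an equivalence of Tannakian categories $\mathsf{Ns}(X) \simeq \mathsf{Ns}(Y)$ compatible with the fiber functors at $x_0$ and $\phi(x_0)$, since an isomorphism of $S$-fundamental group schemes is the same data as such an equivalence by Tannakian duality. The natural strategy is to resolve the rational map: choose a smooth projective variety $Z$ with birational morphisms $p : Z \to X$ and $q : Z \to Y$ such that $q = \phi \circ p$ as rational maps, which exists by resolving the graph closure of $\phi$ (using resolution of indeterminacy; in characteristic $p$ one works with a suitable alteration or uses that the closed graph is dominated by a smooth $Z$, taking care that the relevant statements are birational in nature). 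It then suffices to treat a single birational morphism $f : Z \to W$ between smooth projective varieties and show $\pi_1^S(Z, z) \cong \pi_1^S(W, f(z))$, applying this to both $p$ and $q$.

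So the heart of the matter is: given a birational morphism $f : Z \to W$ of smooth projective varieties, show that pullback $f^* : \mathsf{Ns}(W) \to \mathsf{Ns}(Z)$ is an equivalence of Tannakian categories. First I would check that $f^*$ lands in $\mathsf{Ns}(Z)$ and is a tensor functor preserving the fiber functor: numerical flatness is preserved under pullback because nefness pulls back under proper morphisms and $f^*$ commutes with duals, and $(f^*E)_z = E_{f(z)}$ gives compatibility of fiber functors. Full faithfulness reduces to the statement that $H^0(Z, f^*E) = H^0(W, E)$ for $E \in \mathsf{Ns}(W)$, equivalently that $f_* \sO_Z = \sO_W$ (which holds since $f$ is birational between smooth, hence normal, projective varieties with connected fibers by Zariski's main theorem), applied to $E \tensor F^*$ in the internal Hom. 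The crux is essential surjectivity: every numerically flat bundle $V$ on $Z$ must be shown to descend to $W$, i.e. be isomorphic to $f^* E$ for some $E \in \mathsf{Ns}(W)$.

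The main obstacle, and where the promised vanishing theorem \eqref{sgen} must do its work, is this descent/essential-surjectivity step. The plan is to produce the descended bundle as $E := f_* V$ (or its reflexive hull / double dual), and then prove two things: that $E$ is a numerically flat vector bundle on $W$ (in particular locally free, not merely reflexive), and that the natural adjunction map $f^* f_* V \to V$ is an isomorphism. The natural map $f^* E \to V$ is an isomorphism over the locus where $f$ is an isomorphism (the complement of the exceptional locus, which has codimension $\geq 2$ in $W$ if $Z$ is not simply $W$, but in general has codimension $\geq 1$ in $Z$), so the task is to control the behavior along the exceptional divisor. This is exactly where one expects to invoke a vanishing result of the form $R^i f_* \sO_Z = 0$ for $i > 0$, or more refined vanishing for $f^* E \tensor (\text{twists along exceptional fibers})$, to conclude that $V$ has no nontrivial "twisting" along exceptional fibers and that the higher direct images vanish, forcing $V \cong f^* f_* V$ with $f_* V$ locally free. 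The numerical flatness of $V$ should be precisely the hypothesis that makes $V$ trivial on the fibers of $f$ (a numerically flat bundle restricted to a rationally connected or positive-dimensional exceptional fiber, on which nef and conef force triviality), which is what lets the vanishing theorem apply and what guarantees $f_*V$ is again numerically flat on $W$.

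I expect the hardest and most delicate point to be showing that the pushforward $f_* V$ is \emph{locally free} and numerically flat on $W$, rather than merely reflexive: reflexive hulls can fail to be locally free along the exceptional locus, so one genuinely needs the fiberwise triviality of $V$ (forced by numerical flatness) together with the cohomology-and-base-change / vanishing input to trivialize $V$ along each fiber and descend it cleanly. Controlling nefness of $f_* V$ and $(f_* V)^*$ on $W$ — ensuring no positivity is lost or gained under pushforward — is the other subtle ingredient, and I would handle it by comparing intersection numbers against curves on $W$ with their proper transforms on $Z$, using that $f^*(f_* V) \cong V$ once descent is established.
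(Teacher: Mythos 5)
There is a genuine gap, and it sits at the very first step of your plan. You take $Z$ to be a \emph{smooth} projective variety dominating the closure of the graph of $\phi$. In positive characteristic this requires resolution of singularities, which is unknown in dimension greater than three, and neither of your escape routes works: an alteration is only generically finite, not birational, so pullback along it does not induce an equivalence of the categories ${\sf Ns}$ (and a trace argument fails because the degree of an alteration cannot be forced to be prime to $p$), while ``the closed graph is dominated by a smooth $Z$'' is exactly the unavailable resolution statement. The paper avoids this entirely: its $Z$ is merely the \emph{normalization} of the graph closure, normal but possibly singular, and this is precisely why the vanishing theorem \eqref{sgen} and Lemma \eqref{normall} are formulated for normal projective varieties. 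Your subsequent reduction to a single birational morphism $f:Z\to W$ between \emph{smooth} varieties is therefore not available in the setting of the theorem.

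The second gap is in the descent step itself. The vanishing you propose to invoke, $R^if_*\sO_Z=0$ for $i>0$ for a birational morphism of smooth varieties, is not elementary in characteristic $p$: Kodaira/Grauert--Riemenschneider type vanishing fails there, and this statement is itself a deep theorem requiring entirely different methods. Likewise, your claim that numerical flatness forces triviality of $V$ on exceptional fibers leans on rational (chain) connectedness of fibers of birational morphisms, which is not known in characteristic $p$. The paper's vanishing theorem \eqref{sgen} is a much weaker, Szpiro-type statement ($H^1(X,H^{-n}\tensor V)=0$ for $n\gg 0$), and it is used only to conclude that the pushforward is \emph{reflexive} (Corollary \eqref{d2}), not locally free. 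Local freeness and numerical flatness are then obtained by a route your proposal does not contain: closure of the family of pushforwards under Frobenius pullback (Lemma \eqref{fpullback}), the Mehta--Ramanathan restriction theorem to reduce to a general complete intersection surface $S\subset Y$, resolution of the induced surface over $S$ (surfaces can be resolved in any characteristic), Langer's surface result (Lemma \eqref{surfacev}), and finally Langer's criterion that a strongly semistable reflexive sheaf with $c_1A^{d-1}=c_2A^{d-2}=0$ is numerically flat. Without these ingredients, your final step --- controlling nefness of $f_*V$ by intersecting with proper transforms of curves --- remains a hope rather than an argument, since it presupposes the isomorphism $f^*f_*V\cong V$ and the local freeness that were to be proved.
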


We now briefly outline the approach to the proof of the above theorem. Consider the diagram 
$$\xymatrix{
	&	Z \ar[rd]^{p}\ar[ld]_{q} & \\
	X\ar@{-->}[rr]^{\phi} & & Y
}$$
where $Z$ is the normalization of the closure of graph of $\phi$. Proving \eqref{main} quickly boils down to showing that $E'=q_*p^*E$ is a numerically flat bundle on $X$ whenever $E$ is a numerically flat bundle on $Y$. The main ingredient for showing this is a vanishing theorem for numerically flat bundles (see \eqref{sgen}) whose immediate corollary is that the sheaf $E'$ is reflexive. We then proceed to show that $E'$ is strongly semistable and has trivial Chern classes which implies that $E'$ is numerically flat by (\cite{langer2},$2.2$).

We would like to mention that H\'el\`ene Esnault has communicated to us that she has a different approach to proving \eqref{main} when the birational map $\phi$ is a morphism. \\

\noindent {\it Acknowledgements}: We thank Adrian Langer for going through this article in detail and for his useful suggestions. We also thank  H\'el\`ene Esnault, Najmuddin Fakhruddin and Yujiro Kawamata for their useful comments.

\section{A vanishing theorem}

Throughout this section, let $k$ be any field of positive characteristic. The main result of this section is the following vanishing theorem, which is central to the proof of Theorem(\ref{main}). In the theorem below, if one takes $X$ to be a smooth projective surface and $V=\sO_X$ and we get Szpiro's theorem (\cite{szpiro},Prop~$2.1$). 
\begin{thm}\label{sgen}
Let $X/k$ be a normal projective variety of dimension at least $2$ and $V$ be a numerically flat vector bundle on $X$. Let $H$ be a globally generated line bundle on $X$ of Kodaira dimension at least two. Then for all sufficiently large integers $n$, $H^1(X,H^{-n}\tensor V)=0$. 
\end{thm}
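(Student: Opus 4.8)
The plan is to trade the merely nef-and-big line bundle $H$ on $X$ for a genuinely ample line bundle on the image of its Iitaka morphism, where Serre's vanishing theorem holds in every characteristic, and then to reduce the statement to a reflexivity property of a pushforward. First I would form the morphism $f\colon X\to Y$ attached to $H$: since $H$ is globally generated of Kodaira dimension at least two, $\phi_{|H|}$ has image of dimension at least two, and after passing to the Stein factorization over the normalization of the image I may assume $f\colon X\to Y$ is a proper surjection with connected fibres onto a normal projective variety $Y$ with $\dim Y\geq 2$, satisfying $f_*\sO_X=\sO_Y$ and $H=f^*A$ for an ample line bundle $A$ on $Y$. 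The projection formula gives $R^jf_*(H^{-n}\otimes V)=A^{-n}\otimes R^jf_*V$, so the Leray spectral sequence yields an exact sequence
\[
0\to H^1(Y,A^{-n}\otimes f_*V)\to H^1(X,H^{-n}\otimes V)\to H^0(Y,A^{-n}\otimes R^1f_*V).
\]
For any coherent sheaf $\sG$ on $Y$ one has $H^0(Y,A^{-n}\otimes\sG)=0$ once $n$ is large, since a nonzero section is a nonzero map $A^n\to\sG$ and $A$ is ample; applying this to $\sG=R^1f_*V$ kills the right-hand term. Thus it suffices to prove $H^1(Y,A^{-n}\otimes f_*V)=0$ for all $n\gg 0$.

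Next I would analyse this vanishing on $Y$ through duality. Writing $\sF=f_*V$ and using Serre--Grothendieck duality with the dualizing complex $\omega_Y^\bullet$, the group $H^1(Y,A^{-n}\otimes\sF)$ is dual to $\mathbb{E}xt^{\dim Y-1}(\sF,\omega_Y^\bullet\otimes A^n)$. Because $A$ is ample, the local-to-global Ext spectral sequence collapses for $n\gg 0$ and identifies this with $H^0\!\big(Y,\mathcal{E}xt^{\dim Y-1}(\sF,\omega_Y^\bullet)\otimes A^n\big)$, which vanishes for all large $n$ exactly when the coherent obstruction sheaf $\mathcal{E}xt^{\dim Y-1}(\sF,\omega_Y^\bullet)$ is zero. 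The latter is precisely Serre's condition $S_2$ on the torsion-free sheaf $\sF$, i.e. the requirement that $\sF$ be reflexive on the normal variety $Y$. This is exactly the step that accommodates positive characteristic: on $X$ itself $H$ is only nef and big, so Kawamata--Viehweg vanishing is unavailable in characteristic $p$, whereas on $Y$ the bundle $A$ is honestly ample and Serre vanishing applies unconditionally. Hence the theorem reduces to showing that $f_*V$ is reflexive.

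The main obstacle, therefore, is to prove that $\sF=f_*V$ is reflexive, and this is where numerical flatness of $V$ is indispensable. The sheaf $\sF$ is torsion-free and is locally free on the open set $U\subseteq Y$ over which the formation of $f_*V$ commutes with base change; if $Y\setminus U$ has codimension at least two then $\sF=(\iota_U)_*(\sF|_U)$ is reflexive. Two things must be controlled: that the generic rank of $\sF$, governed by $h^0(V|_{f^{-1}(y)})$, does not drop along a divisor, and that $\sF$ has depth at least two at the remaining codimension-$\geq 2$ points. To handle both I would restrict $V$ to the fibres of $f$: numerical flatness is preserved under restriction, so each $V|_{f^{-1}(y)}$ is numerically flat, hence semistable of degree zero. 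The structure theory of such bundles --- they become trivial after a finite, possibly inseparable, cover --- should rigidify the fibrewise cohomology enough that, through the theorem on formal functions and cohomology-and-base-change, $h^0(V|_{f^{-1}(y)})$ admits no codimension-one jumps and the local-cohomological obstruction to $\operatorname{depth}\geq 2$ vanishes. Both hypotheses, $V$ and $V^\vee$ nef, enter here, the second being needed to bound the fibrewise $H^1$ from the dual side. Establishing this depth/reflexivity estimate --- equivalently, that fibrewise degeneration of sections cannot destroy $S_2$ --- is the crux of the argument; granting it, the duality reduction of the previous paragraph closes the proof.
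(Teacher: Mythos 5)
Your reduction breaks at its very first step. The five-term Leray sequence traps $H^1(X,H^{-n}\otimes V)$ between $H^1(Y,A^{-n}\otimes f_*V)$ and $H^0(Y,A^{-n}\otimes R^1f_*V)$, and your claim that the latter vanishes for $n\gg 0$ ``for any coherent sheaf $\sG$'' is false: it fails exactly when $\sG$ has a nonzero subsheaf supported in dimension zero, because a skyscraper subsheaf is unchanged by twisting by $A^{-n}$ (your argument that a nonzero map $A^n\to\sG$ contradicts ampleness only works when $\sG$ has no zero-dimensional associated subsheaves, e.g.\ when $\sG$ is torsion-free). Higher direct images $R^1f_*V$ are precisely the sheaves prone to this. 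Concretely: let $Y\subset\P^3$ be the cone over a smooth plane cubic $E$, let $f:X\to Y$ be the blow-up of the vertex, $A=\sO_Y(1)$, $H=f^*A$ (globally generated, nef and big, hence of Kodaira dimension $2$), and $V=\sO_X$. Then $R^1f_*\sO_X$ is the skyscraper $H^1(E,\sO_E)=k$ at the vertex, so $H^0(Y,A^{-n}\otimes R^1f_*\sO_X)=k$ for \emph{every} $n$. The theorem is still true in this example (it is Szpiro's case), but only because the connecting map $H^0(Y,A^{-n}\otimes R^1f_*V)\to H^2(Y,A^{-n}\otimes f_*V)$ happens to be injective --- information your argument gives no way to access. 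So even granting everything else you assert, killing the two flanking terms cannot close the proof: one of them genuinely does not die.

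Second, the step you yourself call the crux --- that $f_*V$ is reflexive, equivalently has depth at least $2$ at closed points --- is not proved, only hoped for, and it is not a side lemma: in the paper this is exactly Corollary \ref{d2}, and there it is \emph{deduced from} Theorem \ref{sgen}, not used to prove it. Your plan therefore inverts the paper's logic and concentrates all of the characteristic-$p$ difficulty into the step you leave open. Note that nothing in your sketch uses Frobenius, whereas the known proofs of such vanishing in characteristic $p$ all do: the paper cuts down to a smooth surface by general hyperplane sections and resolution, uses Szpiro's trick of replacing $H$ by $p^mH$ to make the coefficients of the ``error'' divisor prime to $p$, applies Kawamata's covering lemma to produce a finite cover of degree prime to $p$ on which $nf^*H-F$ is ample with $F$ a reduced curve, and, crucially, invokes Langer's effective vanishing (Lemma \ref{normall}), which rests on the boundedness of the family of all Frobenius pull-backs of a numerically flat bundle. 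Cohomology-and-base-change together with the structure theory of numerically flat bundles on fibres is not a substitute: in characteristic $p$ it is precisely this Frobenius input that excludes the pathologies (failure of Kawamata--Viehweg vanishing for nef and big divisors) that your outline implicitly assumes away.
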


The following is an immediate corollary of \eqref{sgen}.
\begin{cor}\label{d2}
 Let $p:Z\to Y$ be a dominant morphism of normal projective varieties over $k$. Assume $p_*(\sO_Z)=\sO_Y$. Let $V$ be a numerically flat vector bundle on $Z$. Then $p_*(V)$ has depth at least $2$ at all points of $Y$ of codimension at least $2$. 
\end{cor}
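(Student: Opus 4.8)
The plan is to first push the vanishing of Theorem~\ref{sgen} down from $Z$ to $Y$, and then to read off the depth of $p_*V$ from it. Since the statement is empty unless $Y$ has points of codimension $\geq 2$, I may assume $\dim Y\geq 2$. Fix a very ample line bundle $H_Y$ on $Y$; it is globally generated of Kodaira dimension $\dim Y\geq 2$. Put $H=p^*H_Y$ on $Z$. The hypothesis $p_*\sO_Z=\sO_Y$ together with the projection formula gives $H^0(Z,H^{n})=H^0(Y,H_Y^{\,n})$ for every $n$, so $H$ is again globally generated of Kodaira dimension at least two, while $Z$ is normal projective of dimension $\geq 2$. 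Theorem~\ref{sgen} therefore yields $H^1(Z,H^{-n}\otimes V)=0$ for all $n\gg 0$. Using the projection formula again, $p_*(H^{-n}\otimes V)=H_Y^{-n}\otimes p_*V$, and the edge map of the Leray spectral sequence for $p$ embeds $H^1(Y,H_Y^{-n}\otimes p_*V)$ into $H^1(Z,H^{-n}\otimes V)$. Hence
\[
H^0(Y,H_Y^{-n}\otimes p_*V)=H^1(Y,H_Y^{-n}\otimes p_*V)=0\qquad(n\gg 0),
\]
the $H^0$-vanishing being automatic since $p_*V$ is torsion free.

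Having depth $\geq 2$ at all points of codimension $\geq 2$ is exactly reflexivity of the torsion-free sheaf $F:=p_*V$, so I would aim to prove $F=F^{\vee\vee}$. Write $0\to F\to F^{\vee\vee}\to T\to 0$ with $T$ supported in codimension $\geq 2$; the goal is $T=0$. Twisting by $H_Y^{-n}$ and taking cohomology, the segment $H^0(Y,F^{\vee\vee}\otimes H_Y^{-n})\to H^0(Y,T\otimes H_Y^{-n})\to H^1(Y,F\otimes H_Y^{-n})$ has vanishing outer terms for $n\gg 0$ (the left because $F^{\vee\vee}$ is torsion free, the right by the display above), whence $H^0(Y,T\otimes H_Y^{-n})=0$ for $n\gg 0$. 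Since $H_Y$ is ample, this forces the zero-dimensional part of $T$ to vanish; in particular when $\dim Y=2$, where $T$ is automatically zero-dimensional, it gives $T=0$ and settles the base case.

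For $\dim Y\geq 3$ it remains to kill the positive-dimensional components of $\supp T$, and here I would induct on $\dim Y$. Let $D\in|H_Y^{\,m}|$ be general with $m\gg 0$, set $Y_1=Y\cap D$ and $Z_1=p^{-1}(Y_1)$, and let $p_1:Z_1\to Y_1$ be the induced morphism. For general $D$ the data $(p_1,V|_{Z_1})$ again satisfy the hypotheses of the corollary: $Y_1$ and $Z_1$ are normal of dimension one less, $V|_{Z_1}$ is numerically flat (nefness passes to a general member), $p_{1*}\sO_{Z_1}=\sO_{Y_1}$, and $F_1:=(p_1)_*(V|_{Z_1})=F|_{Y_1}$ by base change. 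By the inductive hypothesis $F_1$ is reflexive. On the other hand, if $W$ were a positive-dimensional component of $\supp T$, then $W\cap D\neq\emptyset$ has codimension $\geq 2$ in $Y_1$; since $F^{\vee\vee}|_{Y_1}$ is reflexive (a general hyperplane section of a reflexive sheaf is reflexive) and agrees with $F_1$ off $W\cap D$, it is the reflexive hull of $F_1$, so $F_1^{\vee\vee}/F_1=T|_{Y_1}\neq 0$, contradicting reflexivity of $F_1$. Hence $\supp T=\emptyset$ and $F$ is reflexive, completing the induction.

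The main obstacle is the inductive restriction step: one must verify that a single general $D$ simultaneously preserves all the data feeding Theorem~\ref{sgen}, most delicately the normality of $Y_1$ and $Z_1$ and the base-change identity $F|_{Y_1}=(p_1)_*(V|_{Z_1})$ together with the persistence of $p_{1*}\sO_{Z_1}=\sO_{Y_1}$ (which requires $m$ large relative to $R^1p_*\sO_Z$). These are Bertini-type assertions for the base-point-free systems $|H_Y^{\,m}|$ on $Y$ and $|H^{m}|$ on $Z$, whose images have dimension $\geq 2$; each is standard, but it is their joint validity that the argument rests on. By comparison, the transfer of numerical flatness and the restriction behaviour of the reflexive hull are routine.
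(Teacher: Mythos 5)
Your first half is, in substance, the paper's entire proof: reduce to $\dim Y\geq 2$, apply Theorem~\ref{sgen} on $Z$ to the globally generated line bundle $p^*H_Y$ (of Iitaka dimension $\dim Y\geq 2$), and push the vanishing down through the Leray edge map and the projection formula to get $H^1(Y,H_Y^{-n}\tensor p_*V)=0$ for $n\gg 0$. The paper stops there, taking as known the equivalence of this vanishing with the depth statement; you evidently do not trust that equivalence beyond closed points, and your caution is not unreasonable --- the vanishing by itself literally controls only the zero-dimensional part of $T=F^{\vee\vee}/F$, which is why your surface case goes through cleanly. The divergence is in how you supply the rest: an induction on $\dim Y$ through general members $D\in|H_Y^m|$.

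That inductive step contains a genuine gap. It requires $Z_1=p^{-1}(D)$ to be \emph{normal} (Theorem~\ref{sgen}, hence your inductive hypothesis, needs a normal total space). This is a Bertini assertion for $|p^*H_Y^m|$, which is base-point free but not very ample, and you dismiss it as standard. It is standard in characteristic zero; in characteristic $p$ it is false, and it fails under exactly the hypotheses of the corollary. Concretely, in characteristic $p$ let $Z_0=\A^4$ with coordinates $(x,y,z,s)$, $Y=\P^3$, and $p(x,y,z,s)=(s,t,u)$ where $t=y^p-x^ps$ and $u=z^p-x^pt$. Using the derivations $\partial_x$ and $\partial_s-x^p\partial_t+x^{2p}\partial_u$ one checks that $k(s,t,u)$ is algebraically closed in $k(x,y,z,s)$, so a normal projective compactification $Z$ of $Z_0$ mapping to $\P^3$ satisfies $p_*\sO_Z=\sO_Y$ (the fibers are irreducible but everywhere non-reduced: this is an inseparable fibration, and nothing in the corollary excludes it). Yet for \emph{every} surface $D=\{h=0\}$ meeting $\A^3$, the hypersurface $p^{-1}(D)\cap\A^4=\{h(s,\,y^p-x^ps,\,z^p-x^py^p+x^{2p}s)=0\}$ has Jacobian $(0,0,0,\,h_s-x^ph_t+x^{2p}h_u)$; so either its equation is non-reduced, or its singular locus contains the nonempty divisor cut out by $h_s-x^ph_t+x^{2p}h_u$. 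In either case $p^{-1}(D)$ is never normal, no matter how large $m$ is or how general $D$ is, so the induction cannot start for this $p$. Normalizing $Z_1$ does not obviously repair things: the Stein factorization of the normalization over $Y_1$ may introduce a nontrivial cover, destroying both $p_{1*}\sO_{Z_1}=\sO_{Y_1}$ and the base-change identity $F|_{Y_1}=(p_1)_*(V|_{Z_1})$ on which your contradiction argument rests.

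This is precisely the trap the paper's route avoids: it never cuts $Z$ by divisors pulled back from $Y$. The only hyperplane cutting in the paper happens inside the proof of Theorem~\ref{sgen}, on the normal variety to which that theorem is applied, using its own general very ample divisors, where Seidenberg's theorem gives normality in every characteristic. Reducing the depth statement to a cohomological vanishing on $Y$ (rather than to geometry of slices of $Z$) is exactly what keeps the argument away from characteristic-$p$ Bertini failures; if you want to retain your hands-on reflexivity argument, you would need to justify the vanishing-implies-$S_2$ step without slicing $Z$, which amounts to proving the criterion the paper invokes.
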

\begin{proof} We may of course assume that dimension of $Y$ is at least two. The theorem is now equivalent to proving that given an ample line bundle $H$ on $Y$, $H^1(Y, H^{-n}\tensor p_*V)=0$ for all $n$ large enough. By the Leray spectral sequence and projection formula, we have
$$ 0 \to H^1(Y, H^{-n}\tensor p_* V) \to H^1(Z, (p^*H)^{-n} \tensor V)$$
The theorem now follows by Theorem \ref{sgen}.
\end{proof}

The following lemma has been proved in (\cite{langer1}, Theorem $9.2$) when $X$ is nonsingular. However, the only role of nonsingularity in the proof is the following boundedness result:  $V$ together with all its Frobenius pull backs are contained in a bounded family. However, it was pointed out to us by Adrian Langer that this boundedness result remains true for arbitrary normal projective $X$ and can be seen by using (\cite{langer-survey},$3.6$) to reduce to the case when $X$ is a normal surface, and then using standard arguments using desingularization, Riemann Roch and Fulton Chern classes for singular varieties.

Thus we have
\begin{lem}{\rm (\cite{langer1}, Theorem $9.2$))}. \label{normall} Let $X$ be a normal projective variety. Let $V$ be a numerically flat bundle on $X$ such that $V$ and all its Frobenius pull backs are contained in a bounded family. Let $A$ be a fixed polarization on $X$. Then for any ample divisor $D$ such that 
$$ DA^{d-1}>\frac{\mu_{max}(\Omega_{X/k})}{p}$$
$H^1(X,V\tensor \sO_X(-D))=0$. 
\end{lem}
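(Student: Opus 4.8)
The plan is to prove the vanishing by showing that the absolute Frobenius pullback is \emph{injective} on the relevant $H^1$, and then to iterate this injectivity until the target becomes so negative a twist that it is killed by a Serre-type vanishing. Write $F\colon X\to X$ for the absolute Frobenius, so that $F^*\sO_X(-D)=\sO_X(-pD)$, and recall that a numerically flat bundle is strongly semistable of slope $0$; hence every $F^{n*}V$ is again numerically flat, in particular semistable of slope $0$ with respect to $A$. The first step is to establish that for every numerically flat $V$ and every ample $D$ with $DA^{d-1}>\mu_{max}(\Omega_{X/k})/p$, the map
$$F^*\colon H^1(X,V\tensor\sO_X(-D))\longrightarrow H^1(X,(F^*V)\tensor\sO_X(-pD))$$
is injective. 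Granting this, I apply it with $(V,D)$ replaced by $(F^{n*}V,p^nD)$ --- the slope hypothesis only improves under this substitution, since $p\ge 2$ --- and obtain an injection $H^1(X,V\tensor\sO_X(-D))\hookrightarrow H^1(X,(F^{n*}V)\tensor\sO_X(-p^nD))$ for every $n\ge 0$.

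The second step disposes of the target. Since $X$ is normal it is $S_2$, and $(F^{n*}V)\tensor\sO_X(-p^nD)$ is locally free, hence has depth $\ge 2$ at every point of codimension $\ge 2$; by the Enriques--Severi--Zariski--Serre lemma, for $\dim X\ge 2$ the group $H^1(X,(F^{n*}V)\tensor\sO_X(-mD))$ vanishes once $m$ is large. The only subtlety is uniformity in $n$: because $V$ and all its Frobenius pullbacks lie in a bounded family (the standing hypothesis), the constant governing this vanishing can be chosen independently of $n$, so the group vanishes as soon as $p^n$ exceeds it. Taking such an $n$ and using the injection of the first step forces $H^1(X,V\tensor\sO_X(-D))=0$.

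The heart of the matter is the injectivity, and this is where the slope hypothesis enters. A class $\xi$ in the kernel defines a non-split extension $0\to V\tensor\sO_X(-D)\to W\to\sO_X\to 0$ whose Frobenius pullback $0\to(F^*V)\tensor\sO_X(-pD)\to F^*W\to\sO_X\to 0$ splits, say by $s\colon\sO_X\to F^*W$. On the smooth locus $X\setminus\Sing(X)$ (of complementary codimension $\ge 2$) the bundle $F^*W$ carries its canonical connection $\nabla$, for which both the sub and the quotient are horizontal; since $s$ splits the horizontal quotient, the section $\nabla s$ has vanishing component in $\sO_X\tensor\Omega_{X/k}$ and therefore lies in $H^0\big(X\setminus\Sing(X),(F^*V)\tensor\sO_X(-pD)\tensor\Omega_{X/k}\big)$. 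If this group vanishes then $\nabla s=0$, so $s$ is horizontal, descends under the relative Frobenius, extends across $\Sing(X)$ by normality, and forces $\xi=0$. Finally $F^*V$ is strongly semistable of slope $0$, so filtering $\Omega_{X/k}$ by its Harder--Narasimhan pieces and using that the tensor product of a strongly semistable bundle with a semistable one is semistable gives $\mu_{max}\big((F^*V)\tensor\sO_X(-pD)\tensor\Omega_{X/k}\big)\le\mu_{max}(\Omega_{X/k})-pDA^{d-1}$, which is negative by hypothesis. A torsion-free sheaf of negative $\mu_{max}$ has no nonzero global sections, so the obstruction group vanishes and $F^*$ is injective.

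The main obstacle I anticipate is precisely this last estimate carried out honestly on a normal --- hence possibly singular --- variety: $\Omega_{X/k}$ is not locally free, and the canonical connection together with the Cartier descent of a horizontal splitting are only immediately available on the smooth locus. All of these are codimension-one phenomena, so restricting to $X\setminus\Sing(X)$ and invoking the $S_2$ property to extend sheaves and splittings should resolve them. It is worth noting that the factor $1/p$ in the hypothesis is exactly the gain of $F^*\sO_X(-D)=\sO_X(-pD)$ played against the fixed instability $\mu_{max}(\Omega_{X/k})$ of the differentials, which is why this sharper constant --- rather than the cruder one coming from the whole of $F_*\sO_X$ --- is the correct one.
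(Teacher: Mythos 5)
Your overall strategy is correct, and it is in substance the same as the proof of the result in question: the paper does not reprove this lemma but quotes it as Langer's Theorem 9.2, whose argument is precisely your Frobenius-iteration scheme --- injectivity of $F^*$ on $H^1$ under the slope hypothesis (via the canonical connection and Cartier descent, equivalently via the sheaf of exact differentials), followed by killing $H^1\bigl(X,(F^{n*}V)\tensor\sO_X(-p^nD)\bigr)$ for $n\gg 0$ by an Enriques--Severi--Zariski type vanishing made uniform over the bounded family $\{F^{n*}V\}_n$. That uniformity is exactly where the boundedness hypothesis enters, and it is the only point on which the paper adds anything to the citation, namely Langer's remark that boundedness persists when $X$ is normal rather than smooth; your use of the smooth locus plus the $S_2$ property to handle the singularities is consistent with that.

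There is, however, one step you justify by a principle that is not available in characteristic $p$: ``the tensor product of a strongly semistable bundle with a semistable one is semistable.'' What is actually known (Ramanan--Ramanathan) is that a tensor product of two \emph{strongly} semistable sheaves is strongly semistable; when one factor is only semistable --- and the Harder--Narasimhan pieces of $\Omega_{X/k}$ have no reason to be strongly semistable --- semistability of the product can fail, and the general bounds (Langer) involve either $L\mu_{max}$ or correction terms of size comparable to $\mu_{max}(\Omega_{X/k})/p$, which would spoil exactly the constant you are trying to preserve. Fortunately you do not need the $\mu_{max}$ bound you assert: all your argument uses is the vanishing of $H^0$ of $(F^*V)\tensor\sO_X(-pD)\tensor\Omega_{X/k}$ on the smooth locus, and that follows directly from the slope hypothesis. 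A nonzero section is a nonzero map $(F^*V)^*\tensor\sO_X(pD)\to\Omega_{X/k}$; the source is semistable of slope $pDA^{d-1}$ (being a line-bundle twist of the numerically flat, hence strongly semistable, bundle $F^*(V^*)$), so its image would be a subsheaf of $\Omega_{X/k}$ of slope at least $pDA^{d-1}>\mu_{max}(\Omega_{X/k})$, which is impossible. With that one-line replacement your proof is complete and agrees with the cited one.
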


We now recall the following lemma from \cite{kawamata}, which will be required in the proof of Theorem \ref{sgen}. 
\begin{lem}[\cite{kawamata},$17$]\label{ktrick}
Let $k$ be any field and $X/k$ be a nonsingular quasiprojective variety. Let $E=\sum_{i=1}^r a_iE_i$ be a snc divisor on $X$, where $a_i's$ are positive integers coprime to $\Char(k)$. Then there exists a finite dominant morphism $f:Y\to X$ and a positive integer $m$, such that 
\begin{enumerate} 
 \item degree($f$) is coprime to $\Char(k)$.
 \item $f^*E_i = mF_i$ where $F=\sum_i F_i$ is a reduced snc divisor.
\end{enumerate}
\end{lem}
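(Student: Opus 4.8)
The plan is to prove this by the classical Kawamata covering construction: build $Y$ as a suitable fiber product of tame cyclic covers of $X$, one for each $E_i$, arranged so that every $E_i$ becomes divisible by a common integer $m$. Throughout I would fix a very ample line bundle $\sA$ on $X$ and an integer $m$ coprime to $\Char(k)$; since each $a_i$ is coprime to $\Char(k)$, we are free to take $m$ divisible by $\prod_i a_i$, and this is essentially the only place where the coprimality hypothesis on the $a_i$ enters.

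First I would make each $\sO_X(E_i)$ into an $m$-th power in $\Pic(X)$ after adding an auxiliary divisor. Choose $N$ a large multiple of $m$ so that $\sA^{\tensor N}\tensor\sO_X(-E_i)$ is very ample for every $i$ (possible since there are only finitely many $E_i$). By a Bertini smoothness theorem (over a finite field one invokes Poonen's Bertini theorem to keep the members defined over $k$), pick general smooth $H_i\in|\sA^{\tensor N}\tensor\sO_X(-E_i)|$ so that the total divisor $\sum_i(E_i+H_i)$ is snc. Then $E_i+H_i$ is the zero divisor of a section $s_i$ of $\sA^{\tensor N}=(\sA^{\tensor N/m})^{\tensor m}$, so $\sM_i:=\sA^{\tensor N/m}$ is an $m$-th root of $\sO_X(E_i+H_i)$.

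Next I would form, for each $i$, the cyclic cover $\pi_i\colon Y_i=\Spec\bigl(\bigoplus_{j=0}^{m-1}\sM_i^{-j}\bigr)\to X$ with algebra structure given by $s_i\colon\sM_i^{-m}\to\sO_X$. This is finite of degree $m$, branched exactly along $E_i+H_i$ and ramified there to order $m$; because $m$ is coprime to $\Char(k)$ the cover is tamely ramified, so $\pi_i^*(E_i+H_i)=m\cdot(\text{reduced divisor})$, and in particular $\pi_i^*E_i$ is $m$ times a reduced divisor. I would then let $Y$ be the normalization of a dominant component of the fiber product $Y_1\times_X\cdots\times_X Y_r$, with $f\colon Y\to X$ the structure morphism. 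Then $\deg f$ is a power of $m$, hence coprime to $\Char(k)$, giving (1), while $f^*E_i=mF_i$ with $F_i$ the reduced preimage of $E_i$, giving (2) once one knows $F=\sum_i F_i$ is snc.

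The crux of the argument, and the step I expect to be the main obstacle, is verifying that $Y$ is nonsingular and that $f^{-1}\bigl(\sum_i E_i\bigr)_{\mathrm{red}}$ is snc. Here I would argue \'etale-locally: near any point of $X$ the divisor $\sum_i(E_i+H_i)$ is snc, so in suitable \'etale coordinates each branch component is a coordinate hyperplane $\{x_\alpha=0\}$, and $f$ is locally the cover obtained by adjoining the $m$-th roots $x_\alpha^{1/m}$. Because $m$ is invertible in $k$, this root extraction is tame and the resulting local model is again smooth, with the preimage of the coordinate hyperplanes remaining snc; this is precisely where coprimality of $m$ (hence of the $a_i$) with $\Char(k)$ is indispensable, since in the wild case the cover would acquire singularities. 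Assembling these local computations yields smoothness of $Y$ and the snc property of $F$, completing the proof.
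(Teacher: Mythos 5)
A preliminary remark: the paper gives no proof of this lemma at all --- it is quoted verbatim from Kawamata (\cite{kawamata}, Theorem~17) --- so your proposal has to be measured against Kawamata's own argument, whose general shape (tame cyclic covers attached to $E_i$ plus auxiliary very ample divisors, fiber product over $X$, normalization) you have correctly reproduced. However, there is a genuine gap at exactly the step you yourself flag as the crux. The degree-$m$ cyclic cover $\pi_i\colon Y_i\to X$ determined by the section $s_i$ with $\mathrm{div}(s_i)=E_i+H_i$ is \emph{not}, \'etale-locally, ``the cover obtained by adjoining the $m$-th roots $x_\alpha^{1/m}$'' of the individual branch components: it adjoins a single $m$-th root of the \emph{product} of their equations. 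At a point of $E_i\cap H_i$, with local equations $x$ for $E_i$ and $y$ for $H_i$, the cover is the hypersurface $z^m=xy\cdot(\text{unit})$, which for $m\geq 2$ is singular along $x=y=z=0$ --- in every characteristic, so tameness is irrelevant to this failure. (The variety obtained by adjoining $x^{1/m}$ and $y^{1/m}$ separately has degree $m^2$ over the base, so it cannot be your degree-$m$ cover.) Worse, $z^m=xy$ is already normal (a hypersurface whose singular locus has codimension two), and the other factors $Y_j$, $j\neq i$, are \'etale near such a point since their branch divisors avoid it; hence normalizing the fiber product does nothing there, and your $Y$ is genuinely singular along $f^{-1}(E_i\cap H_i)$, with $F$ failing to be snc at precisely those points.

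The missing idea --- the actual content of Kawamata's proof (see also the covering lemma in Esnault--Viehweg, \emph{Lectures on Vanishing Theorems}) --- is to use, for each $i$, not one but $\dim X+1$ general members $H_{i,1},\dots,H_{i,\dim X+1}\in |\sA^{\tensor N}\tensor\sO_X(-E_i)|$, chosen so that all relevant unions are snc and $\bigcap_j H_{i,j}=\emptyset$, and to take the normalized fiber product of \emph{all} the resulting cyclic covers. Then every point of $X$ is avoided by at least one $H_{i,j}$ for each $i$; the corresponding factor is, near that point, branched only along the smooth divisor $E_i$, hence smooth, and on it the equation of $E_i$ becomes an $m$-th power times a unit. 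Pulling the remaining covers back to this factor, one absorbs that $m$-th power, so after normalization each further factor is a cyclic cover branched along a single smooth divisor transverse to the previous ones; iterating, $Y$ is \'etale-locally an iterated tame root extraction of distinct coordinates, which is where smoothness of $Y$ and the snc property of $F$ actually come from. Two smaller points. First, your claim about where coprimality of the $a_i$ enters is off: in the intended statement (the one actually used in Step~5 of the paper, where $f^*E=mF$ is needed) one must prescribe multiplicity $m/a_i$ along $E_i$, and it is these prescribed multiplicities that must be coprime to $\Char(k)$ for the cyclic covers to be tame. Second, what your construction does deliver --- a normal $Y$, degree coprime to $p$, and $F$ reduced --- happens to be all the paper's Step~5 really uses; but it is not the lemma as stated, and the smoothness/snc claim as you argue it is false.
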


\begin{proof}[Proof of \eqref{sgen}] We prove the theorem in five steps.\\
\noindent \underline{Step (1)}: We first reduce to the case where $X$ is a surface. Let $\mu_{max}(\Omega_{X/k})$ denote the slope of the maximal destabilizing subsheaf of $\Omega_{X/k}$, where all slopes are measured with respect to a fixed polarization, $A$, on $X$. Let $D$ be a sufficiently general very ample hypersurface on $X$ such that 
$$ DA^{d-1}>\frac{\mu_{max}(\Omega_{X/k})}{p}$$
Since $D$ is sufficiently general, it is again a normal variety. Since $H$ is a nef line bundle, for every positive integer $n$, $H^{n}\tensor \sO_X(D)$ is ample and $$H^1(X,H^{-n}\tensor V(-D))=0 \ \ \forall \ n\geq 0$$ by \eqref{normall}. Consider the exact sequence 
$$ 0 \to \sO_X(-D) \to \sO_X \to \sO_D \to 0 $$
Tensoring by $H^{-n}\tensor V$, taking cohomology, and using $H^1(X,H^{-n}\tensor V(-D))=0$ we get 
$$ 0 \to H^1(X, H^{-n}\tensor V) \to H^1(D, \left(H^{-n}\tensor V\right)_{|D}) \ \ \ \forall \ n\geq 0$$
Thus, by successively cutting down by general sufficiently ample hypersurfaces, we reduce to the case where $X$ is a normal projective surface. \\

\noindent \underline{Step (2)}: Let $\pi:\tilde{X}\to X$ be a resolution of singularities of $X$. By Leray spectral sequence $H^1(X, H^{-n}\tensor V)$ injects into $H^1(\tilde{X}, \pi^*(H)^{-n}\tensor \pi^*V)$. Moreover, $\pi^*H$ and $\pi^*V$ also satisfy the hypothesis of the theorem. Thus by replacing $X$ by $\tilde{X}$, we may assume $X$ is in fact a nonsingular projective surface. \\

\noindent \underline{Step(3)}: Since $H$ is a nef and big divisor on $X$, there exists an effective $\Q$-divisor $E$ such that $H-E$ is ample. Let $$p:X'\to X$$ be a projective birational morphism such that the union of the exceptional divisor and the proper transform of $E$ is simple normal crossing in $X'$. There exists an effective exceptional $\Q$-divisor $F$ such that $p^*(H-E)-F$ is ample. As in Step (2), we may replace $X$ by $X'$, $H$ by $p^*H$ and $V$ by $p^*V$ to prove the theorem. Also, replace $E$ by $(p^*E+F)$. Thus we reduce to the case where there exists an effective simple normal crossing divisor 
$$\label{aidef}E=\sum a_i E_i$$ such that $H-E$ is ample. \\

\noindent \underline{Step(4)}: In this step, we reduce to the case where the coefficients $a_i's$ occuring in the previous step are integers coprime to $p$. Since ampleness is an open condition
$$ p^m(H-E) + \sum E_i$$
will remain ample for all sufficiently large integer $m$. Clearly, $p^ma_i-1$ is coprime to $p$ for every $i$. In order to prove the theorem, we may replace $H$ by $p^mH$, thanks to (\cite{szpiro},$2.1$). Thus, by replacing $H$ by $p^mH$ and $a_i$ by $(p^ma_i-1)$ and henceforth assume all the $a_i's$ are coprime to $p$.\\

\noindent \underline{Step(5)} Let $f:Y\to X$, be a finite dominant morphism corresponding to the divisor $E=\sum a_iE_i$ as guaranteed by Lemma (\ref{ktrick}). Let $m,F$ be as in the statement of Lemma (\ref{ktrick}). We first claim that $(f^*H)^n(-F)$ is ample divisor on $Y$ for all $n\geq 0$. This can be seen by rewriting
$$ nf^*H - F = \frac{1}{m}f^*(H-E)) + (n-\frac{1}{m})f^*H$$
The divisor on the RHS is a sum of an ample and a nef divisor and thus is ample. \\

\noindent We now fix a polarization $A$ on $Y$ and choose an integer $n$ large enough, such that $L=f^*H^n(-F)$ satisfies
$$ LA > \frac{\mu_{max}(\Omega_{Y/k})}{p}$$
and therefore by Lemma \ref{normall} $H^1(Y, L^{-1}\tensor f^*V)=0$. 

\noindent Consider the short exact sequence $$ 0 \to \sO_Y(-F) \to \sO_Y \to \sO_F \to 0$$ Tensoring by $L^{-1}\tensor f^*V$ we get 
$$ H^0(F, \left(L^{-1}\tensor f^*V\right)_{|F}) \to H^1(Y, f^*H^{-n} \tensor f^*V) \to H^1(Y, L^{-1}\tensor f^*V)(=0)$$
Since $F$ is a reduced curve and $L^{-1}\tensor V$ is an ample vector bundle, $$H^0(F,\left(L^{-1}\tensor f^*V\right)_{|F})=0$$ Therefore 
$$H^1(Y, f^*H^{-n} \tensor f^*V)=0$$ Now to finish the proof we simply observe that $f:Y\to X$ is a finite morphism of degree coprime to $\Char(k)$, therefore $H^1(X,H^{-n}\tensor V)$ is a direct summand of $H^1(Y, f^*H^{-n} \tensor f^*V)$.
\end{proof}

\section{Proof of the Main theorem}

\begin{lem}\label{fpullback}
Let $f:Y\to X$ be a birational morphism of projective varieties where $Y$ is normal and $X$ is nonsingular. Let $F_X$ and $F_Y$ denote the absolute Frobenius morphisms of $X$ and $Y$ respectively. Let $V$ be a vector bundle on $Y$ such that $f_*V$ and $f_*(F_Y^*V)$ are reflexive sheaves. Then the natural map 
$$ \phi: F_X^*(f_*V) \to f_*(F_Y^*V)$$
is an isomorphism.
\end{lem}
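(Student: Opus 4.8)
The plan is to realize $\phi$ as the base-change morphism attached to the commutative square
$$
\xymatrix{
Y \ar[r]^{F_Y} \ar[d]_{f} & Y \ar[d]^{f} \\
X \ar[r]^{F_X} & X
}
$$
which commutes by the naturality of the absolute Frobenius ($f\circ F_Y = F_X\circ f$). Applying $f_*$ to the unit $V \to F_{Y*}F_Y^*V$ and using $f_*F_{Y*}=F_{X*}f_*$ produces, by adjunction, the canonical map $\phi\colon F_X^*(f_*V)\to f_*(F_Y^*V)$; this is the natural map in the statement. Since $X$ is smooth, hence normal, it suffices to prove two things: that both the source and the target of $\phi$ are reflexive sheaves on $X$, and that $\phi$ is an isomorphism over an open set whose complement has codimension at least two. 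Indeed, a morphism of reflexive sheaves on a normal variety that is an isomorphism away from a closed subset of codimension $\geq 2$ is itself an isomorphism, because a reflexive sheaf $\sF$ satisfies $\sF \cong j_*(\sF|_U)$ for any such big open immersion $j\colon U\inj X$.

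The target $f_*(F_Y^*V)$ is reflexive by hypothesis. For the source, recall that $f_*V$ is reflexive by hypothesis, and that since $X$ is regular its absolute Frobenius $F_X$ is flat (Kunz's theorem). Flatness makes $F_X^*$ exact and, using the depth formula for flat local homomorphisms together with the fact that the fibres of $F_X$ are Artinian, one checks that $F_X^*$ preserves both torsion-freeness and the $S_2$ property; hence $F_X^*(f_*V)$ is again torsion-free and $S_2$, and therefore reflexive on the normal variety $X$. (Equivalently, one may write $f_*V$ as the kernel of a map of vector bundles and apply the exact functor $F_X^*$.)

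It remains to locate the locus where $\phi$ is an isomorphism. Let $U\subseteq X$ be the largest open subset over which $f$ is an isomorphism, and let $f_U\colon f^{-1}(U)\to U$ denote the induced isomorphism. Because the formation of the base-change morphism commutes with restriction to opens, and because the absolute Frobenius preserves these opens set-theoretically, $\phi|_U$ is the base-change map associated to the isomorphism $f_U$, and hence is itself an isomorphism. Finally, since $X$ is smooth and $f$ is proper and birational with $Y$ normal, the complement $X\setminus U$ has codimension at least two: if it contained a prime divisor $D$, then localizing at the generic point of $D$ gives a discrete valuation ring, and the valuative criterion of properness extends the inverse rational map across it, forcing $f$ to be an isomorphism near $D$, a contradiction. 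Combining the three steps shows that $\phi$ is an isomorphism.

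I expect the reflexivity of the source $F_X^*(f_*V)$ to be the main technical point: it is exactly here that the smoothness of $X$, through the flatness of Frobenius, is used in an essential way, in contrast to the target, whose reflexivity is simply assumed. The codimension estimate and the principle that an isomorphism in codimension one between reflexive sheaves is an isomorphism are then standard once the reflexivity of both sheaves is in hand.
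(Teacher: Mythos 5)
Your proof is correct and follows essentially the same route as the paper's: flatness of $F_X$ on the smooth variety $X$ makes the source $F_X^*(f_*V)$ reflexive, $\phi$ is an isomorphism off the exceptional locus in $X$, which has codimension at least two, and a map of reflexive sheaves that is an isomorphism in codimension one is an isomorphism. You merely supply details the paper leaves implicit (the adjunction construction of $\phi$, why flat pullback preserves reflexivity, and the valuative-criterion argument for the codimension bound).
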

\begin{proof}
Since $X$ is smooth $F_X$ is a flat morphism. Therefore $F_X^*(f_*V)$ is also a reflexive sheaf. Since $f$ is a birational morphism, the map $\phi$ is an isomorphism on the complement of the exceptional locus in $X$, which has codimension at least two. But both the sheaves are reflexive and hence it is an isomorphism. 
\end{proof}

The following lemma follows from the arguments given in the proof of (\cite{langer1},Lemma $8.3$). 

\begin{lem}\label{surfacev}
Let $f:Y\to X$ be a birational map of smooth projective surfaces. Then for a numerically flat bundle $V$ on $Y$, $f_*V$ is numerically flat and the natural map $$ f^*f_*V \to V$$ is an isomorphism. 
\end{lem}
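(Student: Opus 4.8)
The plan is to prove the two assertions in tandem, exploiting the fact that over smooth surfaces the theory of numerically flat bundles is fully understood via the characterization of \cite{langer2}, namely that a bundle is numerically flat if and only if it is strongly semistable with vanishing first and second Chern numbers. The overall strategy is to reduce everything to the case where $f$ is a composition of blow-ups at points, so that one can analyze the behavior of the pushforward one exceptional divisor at a time. By the elimination of indeterminacy for surfaces (or by Zariski's factorization of birational morphisms of smooth surfaces into point blow-ups), I may first assume $f$ is a \emph{morphism}, and then by induction on the number of blow-ups reduce to the single case $f:Y\to X$ the blow-up of a smooth point $x_0\in X$ with exceptional divisor $\Exceptional=\P^1$.

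First I would establish that $f_*V$ is reflexive: since $Y$ is normal and $V$ is locally free, $f_*V$ is torsion-free, and by \eqref{d2} applied to the numerically flat bundle $V$ (which gives depth at least two in codimension two), $f_*V$ has depth $\geq 2$ at the unique singular point $x_0$ of the map, hence is reflexive; on a smooth surface a reflexive sheaf is locally free, so $f_*V$ is a vector bundle. Next I would check that $f^*f_*V\to V$ is an isomorphism. This map is an isomorphism away from $\Exceptional$, which has codimension one, so the real content is to control it along the exceptional curve. Here I would use that $V$ is trivial on the fibers of $f$: since $V$ is numerically flat, its restriction $V|_\Exceptional$ to $\Exceptional\cong\P^1$ is a numerically flat, hence degree-zero strongly semistable, bundle on $\P^1$, which forces $V|_\Exceptional\cong\sO_{\P^1}^{\oplus r}$. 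This triviality on fibers is what makes the natural adjunction map $f^*f_*V\to V$ an isomorphism, by a standard Grauert-type argument together with the fact that both sheaves are locally free and agree in codimension one.

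Finally, having shown $f_*V$ is a locally free sheaf with $f^*f_*V\cong V$, I would verify that $f_*V$ is numerically flat using the criterion of \cite{langer2}. For strong semistability I would invoke Lemma \eqref{fpullback}: since both $f_*V$ and $f_*(F_Y^*V)$ are reflexive (the latter because $F_Y^*V$ is again numerically flat, so the same depth argument applies), that lemma identifies $F_X^*(f_*V)$ with $f_*(F_Y^*V)$, and iterating this commutes Frobenius pullback past $f_*$; strong semistability of $V$ and all its Frobenius pullbacks then transfers to $f_*V$ by projection/restriction of the destabilizing data. For the Chern classes, since $f^*f_*V\cong V$ has trivial Chern classes and $f^*$ is injective on the relevant Chow/numerical groups of a smooth surface, $f_*V$ must have $c_1=0$ and $c_2=0$ as well. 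Combining strong semistability with the vanishing of Chern numbers gives numerical flatness.

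The main obstacle I expect is the second step, proving $f^*f_*V\to V$ is an isomorphism across the exceptional divisor: controlling the pushforward and its adjunction along $\Exceptional$ requires genuinely using the numerical flatness of $V$ (through the triviality $V|_\Exceptional\cong\sO_\Exceptional^{\oplus r}$), rather than formal reflexivity arguments, since $\Exceptional$ has codimension one and the reflexive-agreement-in-codimension-two principle does not directly apply. All other steps are either formal (reflexivity via depth, Chern class comparison) or reductions (factoring into blow-ups, reducing the biration map to a morphism).
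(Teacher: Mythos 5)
Your proposal is correct in substance and, at its core, coincides with the argument the paper actually relies on: the paper gives no self-contained proof here but cites (\cite{langer1}, Lemma 8.3), and that argument is exactly your reduction --- factor the birational morphism into point blow-ups (\cite{hartshorne}, V.5.3), observe that numerical flatness forces $V|_{\Exceptional}\cong\sO_{\P^1}^{\oplus r}$ on each exceptional curve, and conclude by cohomology-and-base-change/formal functions that $f_*V$ is locally free and $f^*f_*V\to V$ is an isomorphism. (Your reading of ``birational map'' as reducible to a morphism is also the right one; for a genuine map the pushforward from the domain of definition need not even be coherent.) Where you diverge is the final step. You derive numerical flatness of $f_*V$ through strong semistability (Lemma \ref{fpullback} plus a transfer of destabilizing subsheaves) together with Chern-class vanishing and the criterion of \cite{langer2}. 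This route can be made to work, but it is the weakest point of your write-up: a destabilizing subsheaf of $f_*V$ pulls back to a subsheaf of $V$ whose slope must then be controlled with respect to the nef but non-ample class $f^*A$, which requires restricting to general members of $|mf^*A|$ (these avoid the exceptional locus); ``projection/restriction of the destabilizing data'' leaves precisely this to the reader. All of it can be bypassed by one observation: nefness is detected after pullback along a proper surjective morphism, so once $f_*V$ is a vector bundle with $f^*f_*V\cong V$, the nefness of $V$ and of $V^*$ immediately gives nefness of $f_*V$ and of $(f_*V)^*$, i.e.\ numerical flatness --- no Frobenius argument, Chern classes, or appeal to \cite{langer2} is needed. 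Similarly, your use of Corollary \ref{d2} for reflexivity is correct but redundant, since the Grauert-type argument already yields local freeness. In short: same skeleton as the cited proof, with a heavier (but repairable) conclusion where a one-line descent argument suffices.
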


\begin{proof}[Proof of \ref{main}]
Let $Z$ be the normalization of the closure of the graph of the rational map $\phi$ in $X\times Y$. Thus we have a diagram 
$$\xymatrix{
& Z \ar[dl]_p \ar[dr]^q &  \\
X \ar@{-->}[rr]^{\phi} & & Y 
}$$

We first claim that for any $V\in {\sf Ns}(X)$, $q_*p^*V$ is a numerically flat vector bundle on $Y$. The family of sheaves 
$$\{q_*p^*V\}_{V\in {\sf Ns}(X)}$$
is a family of reflexive sheaves on $Y$ by Lemma \eqref{d2} which is closed under Frobenius pull backs by Lemma \eqref{fpullback}. Thus in order to prove $q_*p^*V$ is numerically flat, it is enough to show that it is strongly semistable and for any ample divisor $A$, 
$$ c_1(q_*p^*V)A^{d-1}=c_2(q_*p^*V)A^{d-2} =0 $$
Equivalently, by Mehta-Ramanathan restriction theorem, it is enough to show that its restriction of $W=q_*p^*V$ to a sufficiently general complete intersection surface $S\subset Y$ is numerically flat. Since $Z\to X$ is birational, we can find a nonsingular projective surface $T$ and a proper birational morphism $\pi:T\to S$ which fits in the following commutative diagram
$$\xymatrix{
T \ar[d]_{\pi} \ar[r]^h & Z \ar[d]^q \\
S \ar[r]^i & Y
}$$
Let $W=p^*V$. We claim that the map $i^*q_*W \to \pi_*h^*W$ is an isomorphism. To see this we simply observe that both are reflexive sheaves (see \eqref{d2}) and the map is an isomorphism outside codimension two. However by Lemma \eqref{surfacev}, $i^*q_*W$ is numerically flat. \\

Thus the functors $q_*p^*$ gives an equivalence of the Tannakian category ${\sf Ns}(X)$ with ${\sf Ns}(Y)$ with inverse given by $p_*q^*$. 
\end{proof}


\vspace{3mm}
{\small 
\noindent School of Maths, Tata Institute of Fundamental Research, Homi Bhabha Road, Colaba, Mumbai 400005. India. \\
\noindent {\it email}: amit@math.tifr.res.in \\ 

\noindent School of Maths, Tata Institute of Fundamental Research, Homi Bhabha Road, Colaba, Mumbai 400005. India. \\
\noindent {\it email}: vikram@math.tifr.res.in \\
}

\end{document}